\definecolor{ccolor}{RGB}{203,96,21}
\newcommand{\R}{\mathbb{R}}
\newcommand{\N}{\mathbb{N}}
\newcommand{\0}{\mathbf{0}}
\newcommand{\1}{\mathbf{1}}
\newcommand{\dc}{\mathcal{D}}
\newcommand{\pc}{\mathcal{P}}
\DeclarePairedDelimiter{\abs}{\lvert}{\rvert}
\DeclarePairedDelimiter{\norm}{\lVert}{\rVert}
\DeclarePairedDelimiter{\diag}{\textrm{diag}(}{)}
\DeclarePairedDelimiterX{\inp}[2]{\langle}{\rangle}{#1, #2}
\DeclareMathOperator*{\find}{find}
\newtheorem{thm}{Theorem}[section]
\newtheorem{lem}[thm]{Lemma}
\newtheorem{prop}[thm]{Proposition}
\newtheorem{prob}[thm]{Problem}
\newtheorem{cor}{Corollary}
\newtheorem{rmk}{Remark} 
\newcommand{\bx}{\mathbf{X}}
\newcommand{\bu}{\mathbf{U}}
\newlength{\exfiglength}
\renewcommand\footnotemark{}
\title{\LARGE \bf Data-Driven Superstabilization of Linear Systems under Quantization}
\author{Jared Miller$^{1, 2}$,  Jian Zheng$^2$, Mario Sznaier$^2$, Chris Hixenbaugh$^3$
\thanks{$^1$J. Miller is with the Automatic Control Laboratory (IfA), Department of Information Technology and Electrical Engineering (D-ITET), ETH Z\"{u}rich, Physikstrasse 3, 8092, Z\"{u}rich, Switzerland (e-mail: jarmiller@control.ee.ethz.ch).}
\thanks{$^2$ J. Miller, J. Zheng, and M. Sznaier are with the Robust Systems Lab,  ECE Department, Northeastern University, Boston, MA 02115. (e-mails: zheng.jian1@northeastern.edu, msznaier@coe.neu.edu)}
\thanks{$^3$
Naval Undersea Warfare Center 
1176 Howell St, Newport, RI 02841 (e-mail: chixenbaugh@umassd.edu). }
\thanks{J. Miller was partially supported by Swiss National Science Foundation Grant 200021\_178890. J. Miller, J. Zheng, and M. Sznaier were partially supported by NSF grants  CNS--1646121, ECCS--1808381 and CNS--2038493, AFOSR grant FA9550-19-1-0005, and ONR grant N00014-21-1-2431.  
}}
\begin{document}

\maketitle

\begin{abstract}
\label{sec:abstract}
This paper focuses on the stabilization and regulation of linear systems affected by quantization in state-transition data and actuated input. The observed data are composed of tuples of current state, input, and the next state's interval ranges based on sensor quantization. Using an established characterization of input-logarithmically-quantized stabilization based on robustness to sector-bounded uncertainty, we formulate a nonconservative infinite-dimensional linear program that enforces superstabilization of all possible consistent systems under assumed priors.
We solve this problem by posing a pair of exponentially-scaling linear programs, and demonstrate the success of our method on example quantized systems.



\end{abstract}
\section{Introduction}
\label{sec:introduction}

This paper performs \ac{DDC} of discrete-time linear systems under data quantization in the state-transition records and logarithmic quantization in the input. Input quantization can be encountered in data-rate constraints for network models when sending instructions to digital actuators, and its presence adds a nonlinearity to system dynamics \cite{brockett2000quantized, elia2001stabilization, nesic2009unified}.

The logarithmic input quantizer offers the coarsest possible quantization density \cite{elia2001stabilization} among all possible quantization schemes. These logarithmic quantizers admit a nonconservative characterization as a Lur\'{e}-type sector-bounded input \cite{fu2005sector, fu2009finite, zhou2013adaptive}. 
Data quantization could occur in the storage of sensor data into bits on a computer, and admits the mixed-precision setting of sensor fusion with different per-sensor precisions.




\ac{DDC} is a design method to synthesize control laws directly from acquired system observations and model/noise priors, without first performing  system-identification/robust-synthesis pipeline \cite{HOU20133, formentin2014comparison, hou2017datasurvey}. This paper utilizes a Set-Membership approach to \ac{DDC}: furnishing a controller along with a certificate that the set of all quantized data-consistent plants are contained within the set of all commonly-stabilized plants. Certificate methods for set-membership \ac{DDC} approaches include Farkas certificates for polytope-in-polytope containment \cite{cheng2015robust, miller2023ddcpos}, a Matrix S-Lemma for \acp{QMI} to prove quadratic and robust stabilization \cite{waarde2020noisy, van2023quadratic, miller2022lpvqmi}, and Sum-of-Squares certificates of polynomial nonnegativity \cite{dai2020semi, martin2021data, miller2022eiv_short, zheng2023robust}.

Other methods for \ac{DDC} include Iterative Feedback Tuning \cite{hjalmarsson1998iterative}, Virtual Reference Feedback Tuning \cite{campi2002virtual, formentin2012non}, Behavioral characterizations (Willem's Fundamental Lemma) with applications to Model-Predictive Control \cite{willems2005note, depersis2020formulas, coulson2019data, berberich2021robustmpc}, moment proofs for switching control \cite{dai2018moments}, learning with Lipschitz bounds \cite{robey2020learning, lindemann2021learning}, and kernel regression \cite{thorpe2023physicsinformed}.

The most relevant prior work to the quantized \ac{DDC} approach in this paper is the research in \cite{zhao2022data}. The work in \cite{zhao2022data} performs utilizes the approach of \cite{fu2005sector} by treating logarithmic-quantizing control as an $H_\infty$ small-gain task. They then formulate the consistency set of data-plants as \iac{QMI}, and use the Matrix S-Lemma \cite{waarde2020noisy} to certify common stabilization. In contrast, our work includes quantized data as well as quantized control by developing a polytopic description of the plant consistency set. We then restrict to superstabilization \cite{polyak2001optimal, polyak2002superstable} to formulate \ac{DDC} \acp{LP} over the polytopic consistency set. In the case of quantization of data, the \ac{QMI} approach in \cite{zhao2022data} would then over-approximate the polytopic consistency constraint with a single ellipsoidal region.


The contributions of this work are:

\begin{itemize}
    \item A formulation for superstabilizing \ac{DDC} under input and data quantization
    \item A sign-based \ac{LP} for  data-driven quantized superstabilization that grows exponentially in $n$ and $m$
    \item A more tractable \ac{AARC} that is exponential in $m$ alone.
    
\end{itemize}

This paper has the following structure: 
Section \ref{sec:preliminaries} introduces notation and superstabilization. Section \ref{sec:quantization}
provides an overview of the data and logarithmic-input quantization schemes considered in this work. Section \ref{sec:quantized_ddc} formulates superstabilizing \ac{DDC} under quantization as a pair of equivalent \acp{LP}. 
Section \ref{sec:examples} demonstrates these algorithms on example quantized systems. Section \ref{sec:conclusion} concludes the paper.


\section{Preliminaries}
\label{sec:preliminaries}

\begin{acronym}[WSOS]

\acro{AARC}{Affinely-Adjustable Robust Counterpart}
\acroindefinite{AARC}{an}{a}

\acro{DDC}{Data Driven Control}





\acro{LP}{Linear Program}
\acroindefinite{LP}{an}{a}







\acro{QMI}{Quadratic Matrix Inequality}
\acroplural{QMI}[QMIs]{Quadratic Matrix Inequalities}



\end{acronym}

\subsection{Notation}

\begin{tabular}{p{0.15\columnwidth}p{0.75 \columnwidth}}
$a..b$ & Natural numbers between $a$ and $b$\\
$\R^n$ & $n$-dimensional real Euclidean space\\
$\R^n_{\geq 0} \ (\R^n_{> 0})$ & $n$-dimensional nonnegative  (positive) orthant\\
$\R^{n\times m}$ & $n\times m$-dimensional real matrix space\\
$\1_n, \ \0_n$ & Vector of all ones or zeros\\
$I_n$ & Identity matrix \\
$\otimes$ & Kronecker product\\
$\text{vec}(X)$ & Column-wise vectorization of a matrix\\
$X^T$ & Matrix transpose \\
$\norm{x}_\infty$ & $L_\infty$-norm (vector): $\max_i \abs{x}_i $\\
$\norm{X}_\infty$ & Induced $L_\infty$ norm (matrix): $\max_i \sum_j \abs{X_{ij}}$ \\
$x./y$ & Element-wise division between $x$ and $y$ \\
$A \leq B$ & Element-wise $\leq$ between $A, B \in \R^{n \times m}$
\end{tabular}

\subsection{Superstabilization}
\label{sec:superstability}
A discrete-time system $x_+ = A x$ is \textit{Extended Superstable} if there exists nonnegative weights $v > 0$  such that $\norm{x./v}_\infty$ is a Lyapunov function \cite{polyak2004extended}. This condition may be expressed using an operator norm through the definition $Y = \text{diag}(v)$ and the constraint  $\norm{Y A Y^{-1}}_\infty < 1$. Standard \textit{superstability} is the restriction of extended superstability when $v=\1_n$.

A discrete-time linear system with input of
\begin{align}
    x^{+} &= A x + B u \label{eq:dyn_discrete}
\end{align}
is extended-superstabilized by the full-state-feedback controller $u = K x$ if there exists \cite{polyak2004extended}
$v \in \R_{>0}^n, \ S \in \R^{m \times n}$ with
\begin{align}
& \forall i \in 1..n, \ \alpha \in \{-1, 1\}^n: \nonumber \\
    &\qquad \textstyle  \sum_{j=1}^n \alpha_i \left(A_{ij}v_j + \sum_{k=1} B_{ik}S_{kj}\right) < v_i. & &\label{eq:ess_sign}
\end{align}

The controller $K$ forming the input $u=Kx$ is then recovered by $K = S \diag{1./v}$.
Problem \eqref{eq:ess_sign} is a set of $n 2^n$ strict linear inequality constraints. A more efficient method of imposing extended-superstability is by introducing a new matrix $M \in \R^{n\times n}$ \cite{yannakakis1991expressing}, 
\begin{subequations}
\label{eq:ess_standard}
\begin{align}
    &\textstyle \sum_{j=1}^n M_{ij} < v_i & & \forall i \in 1..n  \\
    &\textstyle  \abs{A_{ij}v_j + \sum_{k=1} B_{ik}S_{kj}} \leq M_{ij} & &\forall i,j \in 1..n.
\end{align}
\end{subequations}
Problems \eqref{eq:ess_sign} and \eqref{eq:ess_standard} are equivalent, in which an admissible selection for $M$ is $M_{ij} = \abs{A_{ij}v_j + \sum_{k=1} B_{ik}S_{kj}}.$
The conditions in \eqref{eq:ess_sign} and \eqref{eq:ess_standard} is necessary and sufficient for full-state feedback extended superstabilization.

If the system in \eqref{eq:dyn_discrete} is superstabilized $(v=\1_n)$ and $\norm{A + BK}_\infty \leq \lambda$ with $\lambda < 1$, then any closed-loop trajectory $x_t$ starting at $x_0$ with $\forall t: u_t =0$ will satisfy $\norm{x_t}_\infty \leq \lambda^t \norm{x_0}_\infty$ \cite{SZNAIER19963550, polyak2001optimal}. The quantity $\lambda$ can be interpreted as a decay rate, and the controller $K$ can be designed using \iac{LP} to minimize $\lambda$ and ensure the fastest possible convergence. A similar minimal peak-to-peak design task for extended superstabilization requires the solution of parametric \ac{LP} with a single free parameter \cite{polyak2004extended}.




\section{Quantization}
\label{sec:quantization}

This section will introduce the two sources of quantization considered in this paper. 

\subsection{Quantization of Data}

Our data $\dc$ with $N_s$ samples is composed of the current state $\hat{x}$, input $\hat{u}$, and bounds on the subsequent state $[p, q]$, forming the $N_s$ tuples $\dc = \cup_{s=1}^{N_s} (\hat{x}_s, \hat{u}_s, p_s, q_s)$. 
We define the polytope $\pc(A, B)$ as the set of all plants that are consistent with the data in $\dc$:
\begin{align}
    \pc = \{(A, B) \mid \forall s \in 1..N_s: A \hat{x}_s + B \hat{u}_s  & \in [p_s, q_s] \}. \label{eq:residual_constraints}
\end{align}


The bounds $p_s, q_s$ at each sample-index $s$ may arise from interval quantization. In the case where a quantization process performs rounding to the first decimal place, the true state transition $x^+ = 0.368$ would be restricted to the interval to the interval described by $p = 0.3$ and $q = 0.4$. 



This data-quantization framework in $\dc$ allows for the integration of $L_\infty$-bounded process-noise.
    In the case where there exists a process-noise $w_s$ such that $A \hat{x}_s + B \hat{u}_s + w_s \in [p_s, q_s]$ with $\norm{w_s}_\infty \leq \epsilon$, interval arithmetic can be used to express the data constraint as $A \hat{x}_s + B \hat{u}_s \in [p_s-\epsilon, q_s+\epsilon]$. 
    

\subsection{Quantization of Input}

A scalar logarithmic quantizer with density $\rho \in (0, 1)$ and step $\delta = (1-\rho)/(1+\rho)$ is defined by $g_\rho: \R \rightarrow \R$ \cite[Equation 7]{fu2005sector}:
\begin{align}
    g_\rho(z) = \begin{cases} \rho^i & \exists i \in \N\mid \frac{1}{1+\delta} \rho^i \leq z \leq \frac{1}{1-\delta} \rho^i \\
    0 & z=0 \\
    -g_\rho(-z) & v < 0.\end{cases} \label{eq:log_quantize}
\end{align}
We will obey the convention of \cite{fu2005sector} in referring to $\rho$ as the \textit{quantization density}, in which a larger $\rho$ refers to a coarser quantizer with wider intervals.
A $\rho$-logarithmically-quantized linear system has dynamics
\begin{align}
    x_{t+1} &= A x_t + B g_\rho(u_t), \label{eq:dyn_quantize}
\end{align}
where the quantization in $g_\rho$ should be understood to occur elementwise in $u_t$. 

The following proposition establishes a sector-bound characterization of logarithmic quantization (for $m=1$):
\begin{prop}[Eq. (21)-(22) in \cite{fu2005sector}]
\label{prop:quant_error}
    For any $z \geq 0$ and logarithmic quantization density $\rho>0$ with 
    \begin{equation}
        \delta = (1-\rho)/(1+\rho), \label{eq:delta_rho}
    \end{equation} the quantization error at $z$ satisfies a multiplicative bound
    \begin{equation}
        z - g_\rho(z) \in [-\delta z, \delta z]. \label{eq:mult_error_bound}
    \end{equation}
\end{prop}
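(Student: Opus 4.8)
The plan is to reduce the multiplicative error bound \eqref{eq:mult_error_bound} directly to the defining interval of the logarithmic quantizer in \eqref{eq:log_quantize} through elementary rearrangement, since the proposition is essentially a restatement of the quantizer's bin geometry. First I would dispose of the degenerate case $z = 0$: here $g_\rho(0) = 0$ by definition, so $z - g_\rho(z) = 0 \in [-\delta \cdot 0, \delta \cdot 0] = \{0\}$ and the claim holds trivially.

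For $z > 0$, the quantizer outputs $g_\rho(z) = \rho^i$ for the index $i$ satisfying
\[
    \frac{1}{1+\delta}\rho^i \leq z \leq \frac{1}{1-\delta}\rho^i .
\]
I would first rewrite the target containment $z - \rho^i \in [-\delta z, \delta z]$ as the equivalent pair of inequalities $(1-\delta)z \leq \rho^i \leq (1+\delta) z$, obtained by isolating $\rho^i$ on one side of each bound. Each of these then follows by cross-multiplying a half of the bin inequality: the left bound $\tfrac{1}{1+\delta}\rho^i \leq z$ gives $\rho^i \leq (1+\delta) z$, and the right bound $z \leq \tfrac{1}{1-\delta}\rho^i$ gives $(1-\delta) z \leq \rho^i$. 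Because $\rho \in (0,1)$ forces $\delta = (1-\rho)/(1+\rho) \in (0,1)$, the factor $1-\delta$ is strictly positive, so the cross-multiplications preserve inequality direction.

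The one point requiring care is well-definedness of the quantizer: I would verify that every $z > 0$ lands in a single bin, so that $g_\rho(z)$ is unambiguous. Checking that consecutive bins abut, the upper endpoint $\tfrac{1}{1-\delta}\rho^{i+1}$ of one bin coincides with the lower endpoint $\tfrac{1}{1+\delta}\rho^{i}$ of the next exactly when $\rho(1+\delta) = 1-\delta$, which rearranges to the stated relation $\delta = (1-\rho)/(1+\rho)$ in \eqref{eq:delta_rho}. Thus the prescribed value of $\delta$ is precisely what tiles $\R_{>0}$ with these bins, and it is the same relation that produces the sector bound. I do not expect a substantive obstacle here; the only subtleties are tracking the sign of $1-\delta$ and handling the $z = 0$ boundary, so the argument is a short chain of algebraic equivalences.
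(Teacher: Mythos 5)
Your argument is correct: the paper states this proposition by citation to \cite{fu2005sector} and gives no proof of its own, and your cross-multiplication of the bin inequalities in \eqref{eq:log_quantize} (using $1-\delta>0$, which follows from $\rho\in(0,1)$ via \eqref{eq:delta_rho}) is exactly the standard derivation behind the cited equations. The observation that \eqref{eq:delta_rho} is precisely the condition making consecutive bins abut is a correct and worthwhile addition, since it explains why this $\delta$ is the tight sector bound rather than merely a valid one.
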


Figure \ref{fig:log_quantizer} plots the graph of a logarithmic quantizer with $\rho = 0.4, \ \delta = 0.4286$ along with the error bound in \eqref{eq:mult_error_bound} over the interval $u \in [-8, 8]$.
\begin{figure}[h]
    \centering
    \includegraphics[width=\exfiglength]{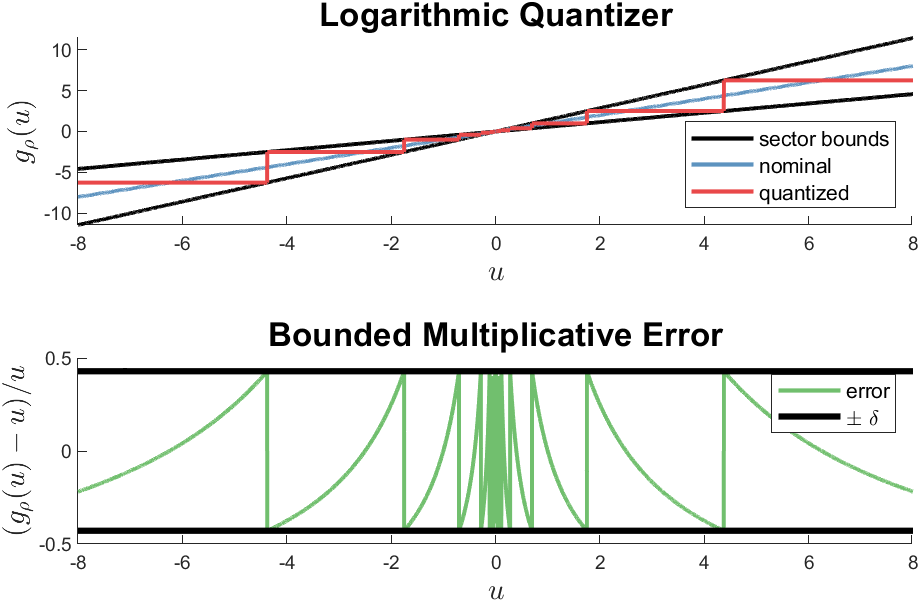}
    \caption{Logarithmic quantizer $(\rho=0.4)$ and error bound}
    \label{fig:log_quantizer}
\end{figure}

The trajectories of a logarithmically quantized systems with $m=1$ are therefore contained in the class of scalar-$\Delta$ sector-bounded models:
\begin{align}
    x_{t+1} &= \left[A + (1+\Delta) B K \right]x_t & 
 & \forall \Delta \in [-\delta, \delta].\label{eq:dyn_quantize_feedback}
\end{align}

Theorem 2.1 of \cite{fu2005sector} proves that the state-feedback controller $u_t = K x_t$  with $K \in \R^{1 \times n}$ \textbf{quadratically} stabilizes \eqref{eq:dyn_discrete} iff $u=Kx$ can \textbf{quadratically} stabilize \eqref{eq:dyn_quantize}.

For systems in which each input channel $u_j$ has a separate quantization density $(\rho_j, \delta_j)$, quadratic state-feedback stabilization of the quantized system will occur if 
\cite[Theorem 3.2]{fu2005sector}:
\begin{align}
& \forall \Delta \in \textstyle \prod_{j=1}^m [-\delta_j, \delta_j] \nonumber \\ 
    & \qquad x_{t+1} = \left[A +  B(I_m+\diag{\Delta}) K \right]x_t. \label{eq:dyn_quantize_feedback_vector}
\end{align}

The work in \cite{fu2005sector} and \cite{zhao2022data} treat common stabilization of \eqref{eq:dyn_quantize_feedback} as an $H_\infty$ optimization using the small-gain theorem for a  sector-bounded uncertainty. The muli-input small-gain formulation in \eqref{eq:dyn_quantize_feedback_vector} is posed solved using a conservative multi-block S-Lemma.


\subsection{Combined Superstability and Input-Quantization}
We can apply extended superstabilization method Section \ref{sec:superstability} towards the control of input-quantized systems, as represented by the sector-bounded model class in \eqref{eq:dyn_quantize_feedback_vector}.
\begin{thm}
\label{thm:quant_ess}
    A logarithmically quantized  system in \eqref{eq:dyn_quantize} is extended superstablized by a controller $u= K x$ if there exists a $v \in \R_{>0}^n, \ S \in \R^{m \times n}, M \in \R^{n \times n}$ with $Y = \diag{v}$ 
\begin{subequations}
\label{eq:ess_quantized}
\begin{align}
& \forall i \in 1..n \nonumber \\
    & \qquad \textstyle \sum_{j=1}^n M_{ij} < v_i & &    \\
    & \forall \Delta \in \textstyle \prod_{j=1}^m [-\delta_j, \delta_j] \nonumber  \\
    &\textstyle -M \leq A Y+  B (I_m + \diag{\Delta}) S \leq M. \label{eq:ess_quantized_con}    
    \end{align}
\end{subequations}
The recovered controller is $K = S Y^{-1}.$
\end{thm}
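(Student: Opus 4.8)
The plan is to show that feasibility of \eqref{eq:ess_quantized} produces a single weighted $L_\infty$ Lyapunov function $V(x) = \norm{x./v}_\infty = \norm{Y^{-1}x}_\infty$ that strictly decreases along every trajectory of the quantized closed loop, uniformly over the admissible quantization errors. First I would rewrite the constraint set in operator-norm form. Setting $S = KY$ (equivalently $K = SY^{-1}$), the matrix appearing in \eqref{eq:ess_quantized_con} factors as $AY + B(I_m + \diag{\Delta})S = [A + B(I_m+\diag{\Delta})K]\,Y =: A_{cl}(\Delta)\,Y$, where $A_{cl}(\Delta)$ is exactly the closed-loop matrix of the sector-bounded family \eqref{eq:dyn_quantize_feedback_vector}. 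The element-wise bound $-M \leq A_{cl}(\Delta)Y \leq M$ together with $\sum_{j} M_{ij} < v_i$ then gives, for every $i$ and every admissible $\Delta$, the row-sum estimate $\sum_j \abs{(A_{cl}(\Delta)Y)_{ij}} \leq \sum_j M_{ij} < v_i$. Since the right-hand side does not depend on $\Delta$ and the inequality is strict, $\lambda := \max_i (\sum_j M_{ij})/v_i < 1$ is a single contraction constant with $\norm{Y^{-1}A_{cl}(\Delta)Y}_\infty \leq \lambda < 1$ for all $\Delta \in \prod_{j=1}^m[-\delta_j,\delta_j]$. This identification is precisely the $M_{ij} = \abs{(A_{cl}(\Delta)Y)_{ij}}$ linearization already recorded after \eqref{eq:ess_standard}.

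The second step is to place each iterate of the genuinely nonlinear quantized dynamics \eqref{eq:dyn_quantize} inside this polytopic family. Applying Proposition \ref{prop:quant_error} channel-by-channel, and using the odd symmetry $g_\rho(-z) = -g_\rho(z)$ in \eqref{eq:log_quantize} to extend the multiplicative bound from $z \geq 0$ to all $z \in \R$ (with $g_\rho(0)=0$ handled trivially), every scalar input satisfies $g_{\rho_j}(z) = (1+\Delta_j)z$ for some $\Delta_j \in [-\delta_j,\delta_j]$. Hence along the closed loop $u_t = Kx_t$ there is a state- and time-dependent $\Delta_t \in \prod_{j=1}^m[-\delta_j,\delta_j]$ with $g_\rho(Kx_t) = (I_m + \diag{\Delta_t})Kx_t$, so that $x_{t+1} = A_{cl}(\Delta_t)x_t$. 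The crucial observation is that although $\Delta_t$ varies, $A_{cl}(\Delta_t)$ always lies in the fixed family over which the certificate of the first step was established.

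Finally I would combine the two steps into the Lyapunov decrease. Using submultiplicativity of the induced $L_\infty$ norm,
\[
V(x_{t+1}) = \norm{Y^{-1}A_{cl}(\Delta_t)Y\,Y^{-1}x_t}_\infty \leq \norm{Y^{-1}A_{cl}(\Delta_t)Y}_\infty\, V(x_t) \leq \lambda\, V(x_t),
\]
so $V(x_t) \leq \lambda^t V(x_0) \to 0$ at the uniform rate $\lambda<1$. This exhibits $V$ as a valid extended-superstability Lyapunov function for the quantized closed loop and recovers $K = SY^{-1}$, proving the claim.

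The hard part is conceptual rather than computational: one must argue that a single, $\Delta$-independent certificate $(v,S,M)$ suffices even though the quantization error $\Delta_t$ is an arbitrary, state-dependent, time-varying selection. This is exactly what the robust (for-all-$\Delta$) quantifier in \eqref{eq:ess_quantized_con} provides, forcing a common weighted Lyapunov function across the entire sector-bounded inclusion so that the memory and nonlinearity of the true quantizer are absorbed into the worst-case contraction $\lambda$. A secondary point requiring care is justifying the multiplicative representation $g_\rho(z)=(1+\Delta)z$ for zero and negative arguments, which I would handle via the odd symmetry noted above.
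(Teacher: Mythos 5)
Your proof is correct and follows essentially the same route as the paper: the paper's own argument also invokes Proposition \ref{prop:quant_error} to embed the quantized closed loop in the sector-bounded family \eqref{eq:dyn_quantize_feedback_vector} and treats the common $M$ as a worst-case certificate over that convex family. You supply details the paper leaves implicit — the explicit contraction constant $\lambda = \max_i (\sum_j M_{ij})/v_i$, the handling of the state- and time-varying selection $\Delta_t$, and the odd-symmetry extension of the sector bound to $z \le 0$ — all of which are sound.
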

\begin{proof}
In the case where $\Delta=0$, then the quantized program \eqref{eq:ess_quantized} is equivalent to the unquantized program \eqref{eq:ess_standard}. We can apply Proposition \ref{prop:quant_error} to generate a sector-bound description of quantization, together with separate input channel quantization based on Equation \eqref{eq:dyn_quantize_feedback_vector} regarding the multiplicative perturbations $\Delta$. The linear inequality constraints \eqref{eq:ess_quantized} are convex,  such that a common $M$ is a worst-case certificate over the all possible closed-loop matrices $AY + B(I_m + \diag{\Delta}) S \leq M$. Such a certificate ensures extended superstability of all systems in \eqref{eq:dyn_quantize_feedback}.
\end{proof}

\begin{cor}
    We can enumerate the convex constraint \eqref{eq:ess_quantized} over the vertices of the hypercube formed by $\delta$, producing the equivalent statement of 
    \begin{align}& \forall \gamma \in \textstyle\prod_{j=1}^m\{-\delta_j, \delta_j\} \nonumber  \\
            &\textstyle -M \leq A Y+  B (I_m +  \diag{\gamma}) S \leq M.\label{eq:ess_quantized_enum} 
    \end{align}
\end{cor}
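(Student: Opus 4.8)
The plan is to reduce the continuous robustness constraint \eqref{eq:ess_quantized_con} to a finite vertex check by exploiting two structural facts: the matrix-valued expression $AY + B(I_m + \diag{\Delta})S$ is \emph{affine} in the perturbation $\Delta$, and the admissible box $\prod_{j=1}^m[-\delta_j,\delta_j]$ has exactly the sign-vertices $\prod_{j=1}^m\{-\delta_j,\delta_j\}$ as its extreme points. First I would expose the affine dependence by writing $B(I_m + \diag{\Delta})S = BS + B\diag{\Delta}S$, so that entrywise the map becomes $\Delta \mapsto (AY + BS)_{ik} + \sum_{j=1}^m B_{ij}\,\Delta_j\,S_{jk}$, a constant plus a term linear in the $\Delta_j$.

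Next I would dispatch the easy inclusion: because each vertex $\gamma$ belongs to the box, any $(M,Y,S)$ feasible for the continuous constraint \eqref{eq:ess_quantized_con} is trivially feasible for the vertex constraint \eqref{eq:ess_quantized_enum}. The content lies in the converse, for which I would use convexity. For fixed $(M,Y,S)$ the target set $\{X : -M \leq X \leq M\}$ is a convex (entrywise) box in $\R^{n\times n}$, and an affine map carries convex combinations to convex combinations. Expressing an arbitrary box point as $\Delta = \sum_k \lambda_k \gamma^{(k)}$ with $\lambda_k \geq 0$, $\sum_k \lambda_k = 1$, affineness gives $AY + B(I_m+\diag{\Delta})S = \sum_k \lambda_k\bigl[AY + B(I_m+\diag{\gamma^{(k)}})S\bigr]$; if every summand obeys $-M \leq \cdot \leq M$ then so does the combination, recovering \eqref{eq:ess_quantized_con}.

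An equivalent and perhaps cleaner route is entrywise: each scalar constraint has the form $-M_{ik} \leq f_{ik}(\Delta) \leq M_{ik}$ with $f_{ik}$ affine, and an affine function on a polytope attains both its maximum and its minimum at a vertex. Thus $f_{ik}(\Delta) \leq M_{ik}$ holds over the whole box iff it holds at every sign-vertex, and symmetrically for the lower bound; intersecting over all $i,k$ yields the stated equivalence.

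I do not expect a genuine obstacle here—this is the standard collapse of an uncertainty-affine robustness constraint onto the vertices of its uncertainty box. The two points deserving care are confirming that the dependence on $\Delta$ is truly affine after the $\diag{\cdot}$ term is expanded (it is not bilinear in $\Delta$, since $\Delta$ enters only once), and treating the two-sided bound correctly, noting that an affine function realizes \emph{both} extrema at extreme points so the upper and lower inequalities reduce to vertices simultaneously.
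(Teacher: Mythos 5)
Your proof is correct and fills in exactly the reasoning the paper leaves implicit (the corollary is stated without proof, and the proof of Theorem 3.1 only alludes to convexity of the constraints): the closed-loop expression is affine in $\Delta$, the box is the convex hull of its sign-vertices, and an entrywise affine function attains both extrema of the two-sided bound at vertices, so the continuous and vertex-enumerated constraints are equivalent. No gaps; this is the standard and intended argument.
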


\begin{cor}
    An equivalent formulation to \eqref{eq:ess_quantized} with respect to sign-enumeration in \eqref{eq:ess_sign} and substitution $\beta =  1+\gamma$ is the following \ac{LP} in $n 2^{n+m}$ constraints:
    \begin{align}
& \forall i \in 1..n, \ \alpha \in \{-1, 1\}^n, \ \beta \in \textstyle\prod_{j=1}^m\{1-\delta_j, 1+\delta_j\} : \nonumber \\
    &\quad \textstyle  \sum_{j=1}^n \alpha_j \left( A_{ij}v_j + \sum_{k=1}^m \beta_k  B_{ik} S_{kj} \right)< v_i. & &\label{eq:ess_sign_quantized}
\end{align}
\end{cor}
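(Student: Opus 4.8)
The plan is to obtain \eqref{eq:ess_sign_quantized} by chaining the two reductions already available: the vertex enumeration of the preceding corollary, which discretizes the continuous perturbation set, and the sign/absolute-value equivalence between \eqref{eq:ess_sign} and \eqref{eq:ess_standard}, which eliminates the auxiliary matrix $M$. First I would start from the vertex form \eqref{eq:ess_quantized_enum}, already equivalent to \eqref{eq:ess_quantized}, and apply the stated substitution $\beta = \1_m + \gamma$. Since $\gamma_k$ ranges over $\{-\delta_k, \delta_k\}$, the shifted variable $\beta_k$ ranges over $\{1-\delta_k, 1+\delta_k\}$, and the diagonal factor becomes $I_m + \diag{\gamma} = \diag{\beta}$, so the closed-loop term reads $B \diag{\beta} S$ with entries $\sum_{k=1}^m \beta_k B_{ik} S_{kj}$. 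Writing $c_{ij}(\beta) = A_{ij}v_j + \sum_{k=1}^m \beta_k B_{ik}S_{kj} = (AY + B\diag{\beta}S)_{ij}$, the vertex form becomes the requirement that, for every vertex $\beta$, the elementwise bound $-M \leq [c_{ij}(\beta)] \leq M$ holds together with the row-sum bound $\sum_j M_{ij} < v_i$.

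The second step is to remove $M$ one row at a time. For a fixed row index $i$ and a fixed vertex $\beta$, the pair of constraints $\abs*{c_{ij}(\beta)} \le M_{ij}$ and $\sum_j M_{ij} < v_i$ is exactly the absolute-value form \eqref{eq:ess_standard} applied to the single row vector $(c_{i1}(\beta), \dots, c_{in}(\beta))$. By the equivalence of \eqref{eq:ess_standard} and \eqref{eq:ess_sign} recorded in the text, this is in turn equivalent to the sign-enumerated family $\sum_{j=1}^n \alpha_j c_{ij}(\beta) < v_i$ over all $\alpha \in \{-1,1\}^n$, which is precisely the statement that the $\ell_1$ row norm satisfies $\sum_j \abs*{c_{ij}(\beta)} < v_i$. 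Ranging $i$ over $1..n$, $\alpha$ over $\{-1,1\}^n$, and $\beta$ over the $2^m$ vertices of $\prod_k \{1-\delta_k, 1+\delta_k\}$ yields the claimed LP, and the cardinality count $n \cdot 2^n \cdot 2^m = n2^{n+m}$ follows immediately.

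The step I expect to require the most care is the interaction between the single auxiliary matrix $M$ in \eqref{eq:ess_quantized} and the per-vertex sign constraints in \eqref{eq:ess_sign_quantized}. The forward direction is routine: given a feasible $(v,S,M)$, the triangle inequality gives $\sum_j \alpha_j c_{ij}(\beta) \le \sum_j \abs*{c_{ij}(\beta)} \le \sum_j M_{ij} < v_i$ for every $\alpha$ and every vertex $\beta$. The reverse direction is the delicate one, because eliminating $M$ must be done consistently across all $\beta$: the natural tight choice is $M_{ij} = \max_\beta \abs*{c_{ij}(\beta)}$, and one must confirm that the row-sum bound $\sum_j M_{ij} < v_i$ is recovered. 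Here the quantifier order is what needs attention — the sign form controls $\max_\beta \sum_j \abs*{c_{ij}(\beta)}$ whereas a single shared $M$ controls $\sum_j \max_\beta \abs*{c_{ij}(\beta)}$ — so the clean way to close the argument is to apply the row-wise $\ell_1$/sign equivalence separately at each vertex $\beta$ while keeping the Lyapunov weights $v$ and the gain variable $S$ common, i.e. to read the epigraph variable $M$ as vertex-dependent. This preserves the shared Lyapunov certificate $v$ and the common controller $K = SY^{-1}$ while letting the variable encoding the elementwise magnitudes adapt to each $\beta$, which is exactly the content of the sign enumeration \eqref{eq:ess_sign_quantized}.
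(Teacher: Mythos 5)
Your proof is correct, and it follows the route the corollary itself prescribes (the paper states this corollary without proof): pass to the vertex enumeration \eqref{eq:ess_quantized_enum}, substitute $I_m + \diag{\gamma} = \diag{\beta}$, and then apply the \eqref{eq:ess_sign}--\eqref{eq:ess_standard} equivalence row by row to eliminate $M$; the count $n\cdot 2^n\cdot 2^m$ is immediate. The point you raise in your last paragraph is not merely a matter of care --- it is a genuine refinement of the statement. With a \emph{single} matrix $M$ shared across all vertices, \eqref{eq:ess_quantized} controls $\sum_j \max_\beta \abs{c_{ij}(\beta)}$ while \eqref{eq:ess_sign_quantized} controls $\max_\beta \sum_j \abs{c_{ij}(\beta)}$, and these can differ (e.g.\ for $n=2$, $m=1$, $\delta=1/2$ take row entries $c_{11}(\beta)=1+\beta$, $c_{12}(\beta)=2-\beta$: the vertex-wise row sums both equal $3$ while the sum of entrywise maxima is $4$). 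So only the implication \eqref{eq:ess_quantized} $\Rightarrow$ \eqref{eq:ess_sign_quantized} holds verbatim; the converse requires reading $M$ as vertex-dependent, exactly as you propose, under which the two become equivalent and \eqref{eq:ess_sign_quantized} is recognized as the exact (least conservative) condition for a common weight $v$ to certify extended superstability of every closed loop in the family \eqref{eq:dyn_quantize_feedback_vector}. Both forms therefore remain valid sufficient certificates for Theorem \ref{thm:quant_ess}, and your vertex-dependent reading of $M$ is consistent with the paper's later move to an adjustable $M(A,B)$ in \eqref{eq:ess_quantized_ddc}. The only cosmetic discrepancy is the index in \eqref{eq:ess_sign}, which prints $\alpha_i$ where $\alpha_j$ is meant; you have used the correct $\alpha_j$ throughout.
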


\begin{prop}
    A controller $K$ that is feasible for quantization $\delta 
 \in \R_{\geq 0}^m$ in \eqref{eq:ess_sign_quantized} will also be feasible for every $\delta' \in \R_{\geq 0}^m$ with $\delta' \leq \delta$.
\end{prop}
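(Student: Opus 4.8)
The plan is to reduce feasibility for a given $\delta$ to a semi-infinite constraint over the full hypercube $\prod_{j=1}^m [1-\delta_j, 1+\delta_j]$, and then exploit the nestedness of these hypercubes under the ordering $\delta' \le \delta$. First I would fix the witnesses: since $K$ is feasible for $\delta$ in \eqref{eq:ess_sign_quantized}, there exist $v \in \R_{>0}^n$ and $S \in \R^{m\times n}$ with $K = S\,\diag{1./v}$ satisfying the $n2^{n+m}$ strict inequalities. I will keep exactly this pair $(v, S)$ and show that it remains feasible for $\delta'$; since $K$ is then recovered by the same formula, this establishes feasibility of $K$ for $\delta'$.

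The key step is a linearity observation. For each fixed $i \in 1..n$ and $\alpha \in \{-1,1\}^n$, the left-hand side of \eqref{eq:ess_sign_quantized}, viewed as a function of $\beta$, is affine in $\beta$. Consequently its maximum over the box $\prod_{j=1}^m [1-\delta_j, 1+\delta_j]$ is attained at one of the vertices $\prod_{j=1}^m\{1-\delta_j, 1+\delta_j\}$. Hence the vertex inequalities in \eqref{eq:ess_sign_quantized} are equivalent to the same strict inequality holding for \emph{every} $\beta$ in the whole box, the strictness being preserved because the maximum over a compact box is attained. This is precisely the vertex-enumeration equivalence already recorded between \eqref{eq:ess_quantized_con} and \eqref{eq:ess_quantized_enum}.

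Next I would invoke monotonicity of the boxes. Because $0 \le \delta'_j \le \delta_j$ for each $j$, we have $1-\delta_j \le 1-\delta'_j$ and $1+\delta'_j \le 1+\delta_j$, so $\prod_{j=1}^m [1-\delta'_j, 1+\delta'_j] \subseteq \prod_{j=1}^m [1-\delta_j, 1+\delta_j]$. Feasibility for $\delta$ yields the strict inequality at every $\beta$ in the larger box, hence at every $\beta$ in the smaller box, and in particular at every vertex of the $\delta'$-box. This is exactly feasibility of $(v,S)$ — and therefore of $K$ — in \eqref{eq:ess_sign_quantized} with $\delta$ replaced by $\delta'$.

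The one subtlety I would flag is that a direct vertex-to-vertex comparison does not work: the vertices of the $\delta'$-box are in general not vertices of the $\delta$-box (for instance $1-\delta'_j$ lies strictly between $1-\delta_j$ and $1$). The convexity/box-containment detour is therefore essential rather than cosmetic, and it is where the affine-in-$\beta$ structure of the constraint does the real work. No estimate is required beyond this; the main care is simply to route the argument through the continuum hypercube rather than comparing finite vertex sets.
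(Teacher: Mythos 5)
Your argument is correct. The paper itself states this proposition without any proof, so there is nothing to compare against line by line; your write-up supplies the missing justification, and it does so along exactly the lines the paper implicitly relies on elsewhere (the vertex-enumeration equivalence between \eqref{eq:ess_quantized_con} and \eqref{eq:ess_quantized_enum}). The essential points are all present: you keep the same witnesses $(v,S)$, observe that for fixed $i$ and $\alpha$ the left-hand side of \eqref{eq:ess_sign_quantized} is affine in $\beta$, pass from the vertex conditions to the strict inequality over the full box $\prod_{j=1}^m[1-\delta_j,1+\delta_j]$ (strictness survives because the maximum of an affine function over a compact box is attained at a vertex), and then use the containment $\prod_j[1-\delta'_j,1+\delta'_j]\subseteq\prod_j[1-\delta_j,1+\delta_j]$ that follows from $0\le\delta'\le\delta$. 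Your flag about the failure of a direct vertex-to-vertex comparison is well taken; an equivalent and slightly more self-contained variant is to write each vertex $\beta'$ of the $\delta'$-box coordinatewise as a convex combination of $1-\delta_k$ and $1+\delta_k$, so that $\beta'$ is a convex combination of vertices of the $\delta$-box and the strict inequality follows by averaging, without ever invoking the continuum box. Either route is fine; the proof stands.
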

\section{Quantized DDC}
\label{sec:quantized_ddc}

This section will outline \iac{DDC} approach towards quantized superstability. 
    
Given data in $\dc$, let $\mathcal{P}$ in \eqref{eq:residual_constraints}
be the polytopic consistency of plants $(A, B)$ in agreement with $\dc$.

Our task is to solve the following problem:
\begin{prob}
    Find a state-feedback controller $u = K x$ such that the quantized system \eqref{eq:dyn_quantize_feedback} is (extended) superstable for all $(A, B) \in \pc$.
    \label{prob:ddc_quantized}
\end{prob}

\subsection{Consistency Polytope Representation}

Let us define $\bx, \bu, \mathbf{p}, \mathbf{q}$ as the following concatenations of data in $\dc$:
\begin{subequations}
\label{eq:data_matrices}
\begin{align}
    \bx &= \begin{bmatrix}
        \hat{x}_1; & \hat{x}_2; & \ldots & \hat{x}_{N_s}
    \end{bmatrix} \\
    \bu &= \begin{bmatrix}
        \hat{u}_1; & \hat{u}_2; & \ldots & \hat{u}_{N_s}
    \end{bmatrix}\\ 
    \mathbf{p} &= \begin{bmatrix}
        p_1; & p_2; & \ldots & p_{N_s} 
    \end{bmatrix}\\
    \mathbf{q} &= \begin{bmatrix}
        q_1; & q_2; & \ldots & q_{N_s}
    \end{bmatrix}.
\end{align}
\end{subequations}

The data-consistency polytope in \eqref{eq:residual_constraints} may be represented using the data matrices in \eqref{eq:data_matrices} as
\begin{align}    
    G_\dc &= \begin{bmatrix}
        -\bx^T \otimes  I_n & -\bu^T \otimes I_n \\
        \bx^T \otimes  I_n & \bu^T \otimes I_n
    \end{bmatrix} \nonumber\\
    h_\dc &= \begin{bmatrix} - \mathbf{p}; &  \mathbf{q} \end{bmatrix}\label{eq:polytope_data}\\    
    \pc &= \{(A, B) \mid G_\dc [\textrm{vec}(A); \textrm{vec}(B)] \leq h_\dc \}, \nonumber
\end{align}
using the Kronecker identity $\textrm{vec}(P X Q) = (Q^T \otimes P)\textrm{vec}(X)$ for matrices $(P, X, Q)$ of compatible dimensions. 
We will denote $L \leq 2 n N_s$ as the number of faces in \eqref{eq:polytope_data} ($h_\dc \in \R^{1 \times L}$).
The number of faces $L$ can be reduced from $2n N_s$ by pruning redundant constraints from $\pc$ \cite{caron1989degenerate} through iterative \acp{LP}.

\subsection{Sign-Based Approach}

The sign-based program in \eqref{eq:ess_sign_quantized} in the \ac{DDC} case can be considered as a finite-dimensional robust \ac{LP}:
    \begin{align}
& \forall i \in 1..n, \ \alpha \in \{-1, 1\}^n, \ \beta \in \textstyle\prod_{j=1}^m\{1-\delta_j, 1+\delta_j\}:  \label{eq:ess_sign_quantized_ddc} \\
    &\quad \textstyle  \sum_{j=1}^n \alpha_j \left(A_{ij}v_j + \sum_{k=1} \beta_k  B_{ik} S_{kj}\right) < v_i, \ \forall (A, B) \in \pc. \nonumber
\end{align}

Program \eqref{eq:ess_sign_quantized_ddc} features a total of $n 2^{n+m}$ strict robust inequalities. We will add a stability tolerance $\eta > 0$ in order to modify the comparator and right-hand side of \eqref{eq:ess_sign_quantized_ddc} into a nonstrict inequality $\leq v_i - \eta$.
Each nonstrict robust inequality in $\alpha, \beta$ may be formulated as a polytope:

\begin{align}    
    G_{\alpha \beta} &= \begin{bmatrix}
        (\diag{v} \alpha)^T \otimes I_n & (\diag{\beta} S \alpha)^T \otimes I_n
    \end{bmatrix} \nonumber \\
    h_{\alpha \beta} &= v - \eta \1 \label{eq:polytope_sign_stab}\\    
    \pc_{\alpha\beta} &= \{(A, B) \mid G_{\alpha \beta} [\textrm{vec}(A); \textrm{vec}(B)] \leq
    h_{\alpha \beta} \}. \nonumber
\end{align}

We will enforce containment of $\pc$ in each $\pc_{\alpha \beta}$ using the Extended Farkas Lemma:

\begin{lem}[Extended Farkas Lemma \cite{hennet1989farkas, henrion1999control}]
\label{lem:ext_farkas}
Let $P_1 = \{x \mid G_1 x \leq h_1\}$ and $P_2 = \{x \mid G_2 x \leq h_2\}$ be a pair of polytopes with $G_1 \in \R^{m \times n}$ and $G_2 \in \R^{p \times n}$. Then $P_1 \subseteq P_2$ if and only if there exists a matrix $Z \in \R^{p \times m}_{\geq 0}$ such that,
\begin{align}
    Z G_1 &= G_2, & Z h_1 \leq h_2.
\end{align}
\end{lem}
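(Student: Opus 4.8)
The plan is to prove the two inclusions separately, with essentially all the content living in the necessity (``only if'') direction, which I would reduce to linear-programming duality applied one row of $G_2$ at a time. For the intended use $P_1 = \pc$ is a nonempty (bounded) consistency polytope, which I will assume throughout; I note below that nonemptiness is genuinely needed.

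For sufficiency, suppose a matrix $Z \in \R^{p \times m}_{\geq 0}$ exists with $Z G_1 = G_2$ and $Z h_1 \leq h_2$. I would take any $x \in P_1$, so that $G_1 x \leq h_1$, and use entrywise nonnegativity of $Z$ to preserve the inequality under left-multiplication: $Z G_1 x \leq Z h_1$. Substituting $Z G_1 = G_2$ and chaining with $Z h_1 \leq h_2$ gives $G_2 x \leq h_2$, i.e. $x \in P_2$. This direction is a one-line computation requiring only the sign condition on $Z$.

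For necessity, assume $P_1 \subseteq P_2$ and argue one row at a time. Writing $g_k^T$ for the $k$-th row of $G_2$ and $h_{2,k}$ for the $k$-th entry of $h_2$, containment forces the primal linear program
\[
    \max\{\, g_k^T x : G_1 x \leq h_1 \,\} \leq h_{2,k},
\]
since any maximizer lies in $P_1 \subseteq P_2$ and hence satisfies $g_k^T x \leq h_{2,k}$. Nonemptiness of $P_1$ makes this program feasible and boundedness makes its optimum finite, so strong LP duality applies to its dual $\min\{\, h_1^T z : G_1^T z = g_k, \ z \geq 0 \,\}$. Strong duality supplies a dual optimizer $z_k \in \R^m_{\geq 0}$ with $G_1^T z_k = g_k$ and $h_1^T z_k$ equal to the common optimal value, whence $h_1^T z_k \leq h_{2,k}$. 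Stacking the transposes $z_k^T$ as the rows of $Z$ yields $Z \in \R^{p \times m}_{\geq 0}$ with $Z G_1 = G_2$ (the row-by-row transpose of $G_1^T z_k = g_k$) and $(Z h_1)_k = z_k^T h_1 \leq h_{2,k}$ for every $k$, i.e. $Z h_1 \leq h_2$, as required.

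The main obstacle is justifying strong duality, and here the hypotheses matter. Boundedness of the polytope $P_1$ guarantees each primal optimum is finite, and nonemptiness guarantees primal feasibility; together these give attainment on both sides and the equality of optimal values that I exploit. I would stress that nonemptiness is essential rather than cosmetic: if $P_1 = \emptyset$ the containment is vacuous, yet a row $g_k$ of $G_2$ lying outside the conical hull of the rows of $G_1$ makes $Z G_1 = G_2$ unsolvable, so no certificate $Z$ exists. The convention that $P_1$ is a nonempty polytope --- satisfied by the consistency set $\pc$ --- excludes this degeneracy and lets the row-wise duality argument close.
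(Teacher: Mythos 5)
Your proof is correct, and it is essentially the canonical argument: the paper itself states Lemma~\ref{lem:ext_farkas} without proof, deferring to \cite{hennet1989farkas, henrion1999control}, and the proof in those references is exactly your row-wise reduction to LP duality --- the easy multiplication argument for sufficiency, and for each row $g_k^T$ of $G_2$ a dual optimizer $z_k \geq 0$ of $\max\{g_k^T x : G_1 x \leq h_1\}$ stacked into $Z$. Two small refinements to your write-up. First, your nonemptiness caveat is genuine and well taken: for $P_1 = \emptyset$ (e.g. $x_1 \leq -1$, $-x_1 \leq 0$ in $\R^2$, with $g_k = (0,1)^T$ outside the conical hull of the rows of $G_1$) containment is vacuous yet no certificate $Z$ exists, so the ``only if'' direction implicitly assumes $P_1 \neq \emptyset$; this is harmless in the paper's application since the consistency polytope $\pc$ always contains the true plant. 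Second, you invoke boundedness of $P_1$ to get a finite primal optimum, but this is not needed: once $P_1 \neq \emptyset$ and $P_1 \subseteq P_2$, the containment itself bounds the objective, since $g_k^T x \leq h_{2,k}$ for every feasible $x$, so the primal is feasible and bounded and strong duality (with dual attainment) applies even when $P_1$ is an unbounded polyhedron. Your argument therefore proves slightly more than you claim, covering the general nonempty-polyhedron version of the lemma.
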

\begin{rmk}
    The Extended Farkas Lemma is a particular instance of a robust counterpart \cite{ben2009robust} when certifying validity of a system of linear inequalities over polytopic uncertainty.
\end{rmk}

A sign-based program to solve Problem \ref{prob:ddc_quantized} is:
\begin{subequations}
\label{eq:lp_stab_sign}
\begin{align}
    \find_{v, S, Z} \quad & \forall \alpha \in \{-1, 1\}^n, \ \beta \in \textstyle\prod_{j=1}^m\{1-\delta_j, 1+\delta_j\}: \\
    & \qquad Z_{\alpha \beta} G_\dc = G_{\alpha \beta}, \quad Z_{\alpha \beta} h_\dc \leq  h_{\alpha \beta} \label{eq:lp_stab_ext}\\
    & \qquad Z_{\alpha \beta} \in \R_{\geq 0} ^{n \times L} \\
    & v-\eta \1_n \in \R_{\geq 0}^n, \ S \in \R^{n \times m}. \label{eq:lp_stab_var}
\end{align}
\end{subequations}

\subsection{Lifted Approach}

We can solve Problem \ref{prob:ddc_quantized} by posing \eqref{eq:ess_quantized} as an infinite-dimensional \ac{LP} in terms of a function $M: \pc \rightarrow \R^{n \times n}$.

\begin{thm}
    A state-feedback controller $u = Kx$ will solve Problem \ref{prob:ddc_quantized} if the following infinite-dimensional \ac{LP} has a feasible solution with $v \in \R_{>0}^n, \ S \in \R^{m \times n}, M: \pc \rightarrow \R^{n \times n}$ with $Y = \diag{v}$
    \begin{subequations}
\label{eq:ess_quantized_ddc}
\begin{align}
& \forall i \in 1..n \nonumber \\
    & \qquad \textstyle \sum_{j=1}^n M_{ij}(A, B) < v_i & & \label{eq:ess_quantized_constraint_1}   \\
    & \forall \beta \in  \textstyle\prod_{j=1}^m \{1-\delta_j, 1+\delta_j\} \nonumber  \\
    &\textstyle -M(A, B) \leq A Y+  B ( \diag{\beta}) S \leq M(A, B).  \label{eq:ess_quantized_constraint}    
    \end{align}
\end{subequations}
\end{thm}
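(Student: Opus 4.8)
The plan is to reduce the family statement to the single-plant result of Theorem \ref{thm:quant_ess} applied pointwise over the consistency polytope $\pc$. The crucial observation is that the only quantities forced to be shared across all plants are $v$ and $S$ (equivalently the controller $K = S Y^{-1}$), whereas the superstability certificate is permitted to vary from plant to plant through the function $M: \pc \to \R^{n \times n}$. This is precisely why the infinite-dimensional \ac{LP} \eqref{eq:ess_quantized_ddc} is the right relaxation: extended superstabilization is a per-plant property, so each $(A,B) \in \pc$ may supply its own witness $M(A,B)$ rather than being forced to share a single robust certificate. Since the theorem asserts only the ``if'' direction, I would establish sufficiency alone, namely that feasibility of \eqref{eq:ess_quantized_ddc} yields a controller solving Problem \ref{prob:ddc_quantized}.

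First I would fix a feasible tuple $(v, S, M(\cdot))$ for \eqref{eq:ess_quantized_ddc} and select an arbitrary plant $(A,B) \in \pc$. Restricting the feasible solution to this plant produces a fixed matrix $M(A,B) \in \R^{n\times n}$ obeying $\sum_{j=1}^n M_{ij}(A,B) < v_i$ for all $i$ by \eqref{eq:ess_quantized_constraint_1}, together with the sandwich bounds $-M(A,B) \leq AY + B\diag{\beta} S \leq M(A,B)$ for every vertex $\beta \in \prod_{j=1}^m \{1-\delta_j, 1+\delta_j\}$ from \eqref{eq:ess_quantized_constraint}.

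Next I would identify these constraints with the vertex-enumerated conditions \eqref{eq:ess_quantized_enum} of the single-plant theorem. Under the substitution $\beta = 1 + \gamma$ one has $\diag{\beta} = I_m + \diag{\gamma}$, so the bounds above are exactly \eqref{eq:ess_quantized_enum} evaluated at $\gamma \in \prod_{j=1}^m \{-\delta_j, \delta_j\}$ for the single plant $(A,B)$. Hence the restricted tuple $(v, S, M(A,B))$ is feasible for the single-plant program \eqref{eq:ess_quantized}, and Theorem \ref{thm:quant_ess} then certifies that $K = S Y^{-1}$ extended-superstabilizes the sector-bounded model \eqref{eq:dyn_quantize_feedback} associated to $(A,B)$. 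Because $(A,B)$ was arbitrary, the same controller extended-superstabilizes every plant in $\pc$, which is exactly the assertion of Problem \ref{prob:ddc_quantized}.

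The argument carries essentially no analytical difficulty once the pointwise reduction is set up, since each step is an immediate invocation of the already-proved single-plant theorem and its vertex-enumeration corollary. I expect the main point requiring care to be conceptual rather than technical: articulating why permitting $M$ to depend on $(A,B)$ is both \emph{sound}, in that each plant is individually certified by its own witness, and strictly \emph{less conservative} than demanding one common $M$ dominating all plants simultaneously. This distinction is what separates the present lifted formulation from a worst-case robust certificate, and it is the only place where the proof must justify the admissibility of the plant-dependent certificate function.
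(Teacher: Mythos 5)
Your proposal is correct and follows essentially the same route as the paper: a pointwise reduction over $\pc$, invoking Theorem \ref{thm:quant_ess} for each plant with its own certificate $M(A,B)$ while the common $(v,S)$ yields the shared controller $K=SY^{-1}$. Your version is more detailed (in particular, making explicit the $\beta = 1+\gamma$ identification with the vertex-enumerated single-plant conditions), but the underlying argument is the one the paper gives.
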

\begin{proof}
    Each plant $(A, B) \in \pc$ has a certificate of extended superstabilizability $(v, M(A, B))$ by Theorem \ref{thm:quant_ess}. 
    If \eqref{eq:ess_quantized_ddc} is feasible, then 
    all plants in $\pc$ simultaneously extended superstabilized by a common $K = S Y^{-1}$ .
\end{proof}

\begin{rmk}
    The function $M(A, B)$ may be treated as an adjustable decision variable given the a-priori unknown $(A, B) \in \pc$ \cite{yanikouglu2019survey}.
\end{rmk}




The infinite-dimensional \ac{LP} in \eqref{eq:ess_quantized_ddc} must be truncated into a finite-dimensional convex program in order to admit computationally tractable formulations. One method to perform this truncation is to restrict $M(A, B)$ to an affine function by defining $M^0, M^{A}_{ij}, M^{B}_{ik} \in \R^{n \times n}$ to form
\begin{align}
    M(A, B) &= \textstyle M^0 + \sum_{i j} M^{A}_{ij}  A_{ij} + \sum_{i k} M^{B}_{ik}  B_{ik}, \label{eq:m_affine}
    \end{align}

We can define the quantities $\mathbf{m} = (m^0, m^a, m^b)$
\begin{subequations}
\begin{align}
    m^0 &= \textrm{vec}(M^0)\\ 
    m^a &= \begin{bmatrix}
        \textrm{vec}(M^A_{11}), & \textrm{vec}(M^A_{21}), & \ldots, & \textrm{vec}(M^A_{nn})
    \end{bmatrix} \\
    m^b &= \begin{bmatrix}
        \textrm{vec}(M^B_{11}), & \textrm{vec}(M^B_{21}), & \ldots, & \textrm{vec}(M^B_{nm})
    \end{bmatrix}.
\end{align}
\end{subequations}
in order to obtain a vectorized expression for \eqref{eq:m_affine} with 
    \begin{align}
        \textrm{vec}(M(A, B)) &= m^0 + m^a \textrm{vec}(A) + m^b \textrm{vec}(B). \\
        \intertext{The row-sums of $M$ can be expressed as }
                \textrm{vec}(M(A, B) \1_n ) &= (\1_n^T \otimes I_n) (m^0 + m^a \textrm{vec}(A))
                \\
                &\qquad +  (\1_n^T \otimes I_n) (m^b \textrm{vec}(B)).\nonumber
    \end{align}

    The constraint in \eqref{eq:ess_quantized_constraint_1} with stability factor $\eta>0$ can be reformulated as membership in the following polytope $\pc_M$:
    \begin{align}
            G_M &= (\1_n^T \otimes I_n)\begin{bmatrix}
        m^a, & m^b
    \end{bmatrix} \nonumber \\
    h_M &= [v - \eta - (\1_n^T \otimes I_n)m^0]  \\  
    \pc_{M} &= \{(A, B) \mid G_{M} [\textrm{vec}(A); \textrm{vec}(B)] \leq
    h_{M} \}. \nonumber
    \end{align}

    

The polytopic constraint region in \eqref{eq:ess_quantized_constraint} for each $\beta \in \textstyle\prod_{j=1}^m\{1-\delta_j, 1+\delta_j\}$ is
\begin{align}    
    G_{\beta}^s &= \begin{bmatrix}
        -m^a - \diag{v}^T\otimes I_n & -m^b- (\diag{\beta} S)^T \otimes I_n \\
        -m^a + \diag{v}^T\otimes I_n & -m^b+ (\diag{\beta} S)^T \otimes I_n
    \end{bmatrix} \nonumber \\    
    h_{\beta} &= \begin{bmatrix}
         m^0; & m^0
    \end{bmatrix} \label{eq:polytope_sign_stab_aarc}\\    
    \pc_{\beta} &= \{(A, B) \mid G_{\beta} [\textrm{vec}(A); \textrm{vec}(B)] \leq
    h_{\beta} \}. \nonumber
\end{align}

The affine restriction of $M$ in \eqref{eq:m_affine} results in \iac{AARC} program for \eqref{eq:ess_quantized_ddc}:
\begin{subequations}
\label{eq:lp_stab_aarc}
\begin{align}
    \find_{v, S, Z, \mathbf{m}} \quad & \forall \beta \in \textstyle\prod_{j=1}^m\{1-\delta_j, 1+\delta_j\}: \\
    & Z_{\beta} G_\dc = G_{\beta}, \quad Z_{\beta} h_\dc \leq  h_{\beta} \label{eq:lp_stab_ext_aarc}\\
    & Z_{\beta} \in \R_{\geq 0} ^{2 n^2 \times L} \\
    & Z_M G_\dc = G_M, \quad Z_M h_\dc \leq h_M \\
    & Z_M \in \R^{n \times L}_{\geq 0} \\
    &m^0 \in \R^{n^2 \times 1}, \   m^A \in \R^{n^2 \times n^2}\\
    &m^B \in \R^{n^2 \times nm} \\
    & v-\eta \1_n \in \R_{\geq 0}^n, \ S \in \R^{n \times m}. \label{eq:lp_stab_var_aarc}
\end{align}
\end{subequations}

\subsection{Computational Complexity}

We will quantify the computational complexity  \eqref{eq:lp_stab_sign} and \eqref{eq:lp_stab_aarc} based on the number of robust inequalities (for \eqref{eq:ess_sign_quantized_ddc} and \eqref{eq:ess_quantized_ddc}), scalar variables $(v, S, Z, \mathbf{m})$, slack variables/constraints introduced in reformulations of scalar inequality constraints (e.g., $v - \eta \1_n \in \R^n_{\geq 0} \mapsto q \in \R^n_{\geq 0}, \ v-\eta \1_n = q$), scalar inequality constraints $(\in \R_{\geq 0})$, and scalar equality constraints. These counts (up to the highest order terms to save space) are listed in Table \ref{tab:complexity}.


\begin{table}[h]
    \centering
        \caption{Comparison  between \acp{LP} \eqref{eq:lp_stab_sign} and \eqref{eq:lp_stab_aarc}}
    \label{tab:complexity}
    \begin{tabular}{r|l l}
         & sign-based \eqref{eq:lp_stab_sign} & \ac{AARC} \eqref{eq:lp_stab_aarc} \\
        robust ineq. &  $n 2^{n+m}$& $n + n^2 2^{m+1}$\\
        scalar vars. & $n(m+1+ 2^{n+m}L)$ & $n(m+1) + n^2(1+nm+n^2)$ \\
        & & $\quad + n^2 L 2^{m+1}$\\
        slack vars. & $n+2^{n+m+1} n^2$ & $n(1+2^{m+1})$\\ 
        eq. cons. & $n^2(n+m) 2^{n+m} + n$& $(2^{m+1} n^2+n)(n+m) + n$ \\
        ineq. cons. & $2^{n+m}(n + nL) + n$& $n(L+2) + n^2 (L+1)2^{m+1}$ 
    \end{tabular}
\end{table}


Note how $n$ appears exponentially in the  sign-based scheme \eqref{eq:lp_stab_sign}, while $n$ enters only polynomially for quantities in the \ac{AARC} \eqref{eq:lp_stab_aarc}.

Given that the running-time of an Interior Point Method for $N$-variable \acp{LP} up $\gamma$-optimality is approximately $O(N^{\omega+0.5} \abs{\log(1/\gamma)})$ (with matrix multiplication constant $\omega$) \cite{wright1997primal}, the \ac{AARC} is more computationally efficient than the sign-based scheme as $n$ increases.


\section{Numerical Examples}

\label{sec:examples}

MATLAB (2021a) code to execute all examples is publicly available
\footnote{\url{https://github.com/Jarmill/quantized_ddc}}. The convex optimization problems  \eqref{eq:lp_stab_sign} and \eqref{eq:lp_stab_aarc} are modeled in YALMIP \cite{lofberg2004yalmip} 
(including the robust programming module \cite{lofberg2012} with option `\texttt{lplp.duality}') 
and solved in Mosek 9.2 \cite{mosek92}.


\subsection{3-state 2-input}

The first example will involve superstabilization of the following system 3-state 2-input discrete-time linear system:
\begin{subequations}
\label{eq:sys1}
\begin{align}
    A &= \begin{bmatrix}
        -0.1300&-0.3974&0.2030\\
-0.3974&-0.5000&0.2990\\
0.2030&0.2990&-0.5262
    \end{bmatrix}, \\
    B &= \begin{bmatrix}
        0.2179&1.2300\\
0.3592&0\\
-1.1553&0
    \end{bmatrix}.
\end{align}
\end{subequations}

System \eqref{eq:sys1} is open-loop unstable with eigenvalues of $[-1.0185, -0.2613, 0.1236].$

We collect $T=100$ input-state-transition observations of system \eqref{eq:sys1} to form $\dc$. The transition observations are quantized according to the following partition with 9 bins:
\begin{align}
    (-\infty, -4] \cup [-4, -3] \cup [-3, -2] \ldots [3, 4] \cup [4, \infty). \label{eq:partition1}
\end{align}

Superstabilization $(v=\1_3)$ is performed by solving the sign-based scheme in \eqref{eq:ess_sign_quantized_ddc}. An objective is added to minimize $\lambda \in \R$ such that $\forall i: \sum_{j} M_{ij} \leq \lambda$, in which $\lambda < 1$ indicates a successful worst-case superstabilization under input and data quantization. 

Figure \ref{fig:ss_rho_vs_lam} plots worst-case optimal values of $\lambda$ as a function of the quantization density $\rho$, in which $\rho$ is the same for all inputs. The $T=60$ data preserves the first 60  elements of the 100 observations in $\dc$ (with a similar process for $T=80$). Gain values for the ground truth (model-based case when \eqref{eq:sys1} is known) are presented as a comparison. We note that $\rho \rightarrow 1$ results in $\delta \rightarrow 0$ by \eqref{eq:delta_rho}, for which the (limiting) quantization law at $\rho=1$ is $g_{\rho=1}(u) = u$.

\begin{figure}[h]
    \centering
    \includegraphics[width=0.8\exfiglength]{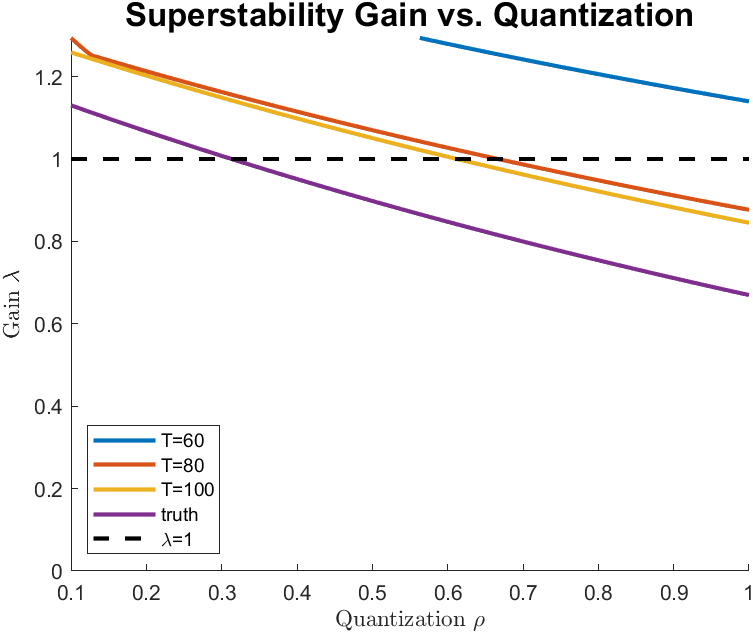}
    \caption{Peak-to-peak gain $(\lambda)$ vs.  quantization density $(\rho)$}
    \label{fig:ss_rho_vs_lam}
\end{figure}


Table \ref{tab:min_rho} lists the minimal feasible $\rho$ (up to four decimal places) such that the sign-based formulation in \eqref{eq:ess_sign_quantized_ddc} returns a feasible superstabilizing (SS) or extended superstabilizing (ESS) controller. The symbol $\varnothing$ indicates primal infeasiblility of the \ac{LP} for all $\rho \leq 1$.

\begin{table}[h]
    \centering
        \caption{Minimal $\rho$ with sign-based of \eqref{eq:sys1}}
    \label{tab:min_rho}
    \begin{tabular}{r|c c c c}
        $T$ & 60 & 80 & 100 & Truth\\ \hline
        SS &  $\varnothing$ & 0.6727 & 0.6182 & 0.3182 \\
        ESS & 0.9397 & 0.3494 & 0.2081 & 0.1422\\
        
    \end{tabular}
\end{table}

Table \ref{tab:min_rho_aarc} lists the minimal $\rho$ for \ac{AARC}-based quantized superstabilization. There is no difference between the ground-truth values in Tables \ref{tab:min_rho}  and \ref{tab:min_rho_aarc}, because the underlying finite-dimensional \acp{LP} with nonrobust inequality constraints are equivalent.
\begin{table}[h]
    \centering
        \caption{Minimal $\rho$ with \ac{AARC} stabilization of \eqref{eq:sys1}}
    \label{tab:min_rho_aarc}
    \begin{tabular}{r|c c c c}
        $T$ & 60 & 80 & 100 & Truth \\ \hline
        SS &  $\varnothing$ & $\varnothing$ & 0.9500 & 0.3182\\
        ESS & $\varnothing$ & $\varnothing$ & 0.7723 & 0.1422\\
        
    \end{tabular}
\end{table}


\subsection{5-state 3-input}

The second example performs extended superstabilization over the following system with 5 states and 3 inputs:
\begin{subequations}
\begin{align}
    A &= (1/5)[\min(i/j, j/i)]_{ij} + (1/2) I_{5},\\
    B &= [I_3; \0_{2 \times 3}].
\end{align}
\label{eq:sys2}
\end{subequations}

System \eqref{eq:sys2} is open-loop unstable with purely real eigenvalues of $[1.0633, 0.6507, 0.5502, 0.5046, 0.4812]$. The nominal system in \eqref{eq:sys2} can be extended-superstabilized until $\rho = 0.2245$.

The  $T=350$ state-input collected transitions of \eqref{eq:sys2}  are quantized according to the following partition with 26 bins:
\begin{align}
    (-\infty, -6] \cup [-6, -5.5] \cup [-5.5, -5] \ldots [5.5, 6] \cup [6, \infty). \label{eq:partition2}
\end{align}

The polytope in \eqref{eq:residual_constraints} has $2nT = 3500$ faces in $n(n+m)=40$ dimensions, of which $185$ of these faces are nonredundant.

We successfully solve the data-driven common-extended-superstabilizing \ac{AARC} program in \eqref{eq:lp_stab_aarc} at $\rho=0.8$  to acquire a feasible controller with parameters
    \begin{align}
        K &= -\begin{bmatrix}
            0.6434 & 0.0943 & 0.0785 & 0.0609 & 0.0330 \\
0.0965 & 0.6513 & 0.1409 & 0.0899 & 0.0842 \\
0.0650 & 0.1392 & 0.6528 & 0.1463 & 0.1183 \\
        \end{bmatrix} \nonumber \\
        v &= \begin{bmatrix}
            0.0137, &     0.0069  &  0.0058   & 0.0289   & 0.0289
        \end{bmatrix}.
    \end{align}
\section{Conclusion}

\label{sec:conclusion}

This paper presented a method to perform superstabilizing control of linear systems under state-transition data quantization and actuated-input quantization. The generated sign-based finite-dimensional \ac{LP} and lifted  infinite-dimensional \acp{LP} are nonconservative with respect to the common superstabilization task. This infinite-dimensional \ac{LP} has a number of constraints that is polynomial in the number of states $n$ and exponential in the number of inputs $m$. \Iac{AARC} was employed to truncate the infinite-dimensional \ac{LP}, in order to gain tractability at the expense of conservatism.

The logarithmic-quantization approach laid out in this paper involves an infinite number of quantization levels. Future work includes adapting the adaptive finite-level quantizing method of \cite{fu2009finite} for \ac{DDC}-superstabilization. Other investigations aim to decrease the computational impact of the presented scheme by formulating nonconservative \ac{LP} formulations that scale in a polynomial manner with $m$ rather than in an exponential manner, 
by reducing the conservatism of the \ac{AARC} truncation by allowing $M$ to be polynomial (using sum-of-squares certificates of nonnegativity), 
and by formulating control laws in the setting where $(\hat{x}, \hat{u})$ are also data-quantized (resulting in an Error-in-Variables model \cite{soderstrom2007errors} addressable by polynomial optimization \cite{miller2022eiv_short}).

\section*{Acknowledgements}

The authors would like to thank Roy Smith and the Automatic Control Lab of ETH Z\"{u}rich for their support.




\bibliographystyle{IEEEtran}
\bibliography{references.bib}

\end{document}